\newcommand{\po}{\left(}
\newcommand{\pf}{\right)}
\newcommand{\co}{\left[}
\newcommand{\cf}{\right]}
\newcommand{\cco}{\llbracket}
\newcommand{\ccf}{\rrbracket}
\newcommand{\R}{\mathbb R}
\newcommand{\N}{\mathbb N} 
\newcommand{\dd}{\text{d}}
\newcommand{\na}{\nabla}
\newtheorem{thm}{Theorem}
\newtheorem{rem}{Remark}
\newtheorem{prop}[thm]{Proposition}
\title{A note on a Vlasov-Fokker-Planck equation with non-symmetric interaction}
\author{Pierre Monmarché}
\begin{document}
\maketitle

\begin{abstract}
In the recent \cite{LudoMaxime}, Cesbron and Herda study a Vlasov-Fokker-Planck (VFP) equation with non-symmetric interaction, introduced in physics to model the distribution of electrons in a synchrotron particle accelerator. We make four remarks in view of their work: first, it is noticed in \cite{LudoMaxime} that  the free energy classically considered for the (symmetric) VFP equation is \emph{not} a Lyapunov function in the non-symmetric case, and we will show however that this is still the case for a suitable definition of the free energy (with no explicit expression in general). Second, when the interaction is sufficiently small (in $W^{1,\infty}$), it is proven in \cite{LudoMaxime} that the equation has a unique stationary solution which is locally attractive; in this spirit,  we will see that, when the interaction force is Lipschitz with a sufficiently small constant, the convergence is  global. Third, we also briefly discuss the mean-field interacting particle system corresponding to the VFP equation which, interestingly, is a non-equilibrium Langevin process. Finally, we will see that, in the small interaction regime, a suitable (explicit)  non-linear Fisher information is contracted at constant rate, similarly to the situation of Wasserstein gradient flows for convex functionals (although here the dynamics is not a gradient flow).
\end{abstract}

\section{Introduction}

The Vlasov-Fokker-Planck (VFP) equation studied in \cite{LudoMaxime} reads
\begin{equation}\label{eq:VFP}
\left\{\begin{array}{l}
\partial_t f + v\cdot \partial_x f + \po F_{f}(t,x) -   x\pf  \partial_v f =  \gamma \partial_v \cdot \po v f + \partial_v f\pf \\
\\
F_f(t,x) = -\lambda \int_{\R^2} \partial_x K(x-y) f(t,y,w) \dd y \dd w\\
\\
f(0,x,v) = f^{\text{in}}(x,v)
\end{array}\right.
\end{equation}
where $f(t,x,v)$ stands for the density of electrons at time $t\geqslant 0$, position $x\in\R$ and velocity $v\in\R$, $\gamma,\lambda$ are positive parameters and $K\in \mathcal C^2(\R,\R)$ is an interacting potential, with $\|\partial_x^2 K\|_\infty <\infty$. We assume that the initial density $f_{\text{in}}$ has finite second moment. In fact, in \cite{LudoMaxime}, the equation involves additional physical parameters $\theta,\alpha$ to describe the temperature  and external potential intensity of the system, but we normalized them to $1$, which amounts to rescale time and position. Similarly,  in the present work, by contrast to \cite{LudoMaxime}, we assume that the mass of the initial condition is normalized to $\int_{\R^2} f^{\text{in}} =1$ (which amounts to a rescaling of $\lambda$, since the mass is preserved by the equation). We refer to  \cite{LudoMaxime} for physical motivations, well-posedness results and further references. This equation has been extensively studied in various context but, classically, motivated by the reciprocity principle of forces in classical physics, the potential $K$ is assumed to be even, while  \cite{LudoMaxime} focuses on the case where it is not (motivated by models of electrons in a synchrotron particle accelerator, cf. \cite{LudoMaxime}). From a mathematical perspective, this leads to interesting consequences.  Classically (see e.g. \cite{GuillinWuZhang,CMCV,LudoMaxime}), the free energy associated to \eqref{eq:VFP} is defined as
\begin{multline*}
\mathcal E(f) = \int_{\R^2} f(x,v) \ln f(x,v) \dd x \dd v  + \int_{\R^2} \frac{v^2 + x^2}{2} f(x,v)\dd x \dd v \\
+ \lambda \int_{\R^4} K(x-y) f(x,v) f(y,w)\dd x\dd v \dd y \dd w  \,,
\end{multline*}
and, when $K$ is even, it is known to decay along the flow of \eqref{eq:VFP} (i.e. the free energy is  a Lyapunov function for \eqref{eq:VFP}). However, \cite[Corollary 3.6]{LudoMaxime} shows that, in fact, as soon as $K$ is not even then, whatever the intensity $\lambda$ of the interaction, there exist initial conditions such that $t\mapsto \mathcal E(f(t,\cdot))$ initially increases. Notice that $\mathcal E(f)$ is unchanged if we replace the potential $K$ by the symmetrized $x\mapsto (K(x)+K(-x))/2$. This problem is reminiscent of other McKean-Vlasov models where the construction of suitable non-linear free energy functions is unclear, see e.g. \cite{YulongLu}, which gives a general motivation for the present note. Following Large Deviation theory (e.g. \cite{GuillinWuZhang}), the free energy should be obtained as the limit as $N\rightarrow \infty$ of the scaled relative entropy with respect to equilibrium for the system of $N$ interacting particles corresponding to~\eqref{eq:VFP}. The latter is the Markov process $(X,V)=(X^1,\dots, X^N,V^1,\dots V^N)\in\R^{2N}$ solving
\begin{equation}\label{eq:mean_field_particle}
\forall i\in\cco 1,N\ccf\,,\quad \left\{
\begin{array}{rcl}
\dd X^i_t &=& V^i_t\dd t\\
\dd V^i_t &=  & -\po X_t^i + \gamma V_t^i + \frac{\lambda}{N-1} \sum_{j\neq i} \partial_x K(X^i_t-X^j_t) \pf \dd t + \sqrt{2} \dd B_t^i\,, 
\end{array}
\right.
\end{equation}
where $B=(B^1,\dots,B^N)$ is a standard $N$-dimensional Brownian motion. Equivalently, this reads
\begin{equation}\label{eq:mean_field_particle_2}
 \left\{
\begin{array}{rcl}
\dd X_t &=& V_t\dd t\\
\dd V_t &=  & -\po X_t + \gamma V_t + \lambda F(X_t)\pf \dd t + \sqrt{2} \dd B_t\,, 
\end{array}
\right.
\end{equation}
where, for all $i\in\cco 1,N\ccf$,
\[F_i(x_1,\dots,x_N) =  \frac{1}{N-1} \sum_{j \in\cco 1,n\ccf\setminus\{i\}} \partial_x K(x_i-x_j)\,. \]
We write $(P_t^N)_{t\geqslant 0}$ the associated Markov semi-group, so that $\nu P_t^N$ is the law of $(X_t,V_t)$ when $(X_0,V_0)$ is distributed according to $\nu$.

When $K$ is even, $F= \na U_N$ where
\[U_N(x_1,\dots,x_n) = \frac{1}{2(N-1)} \sum_{i,j\in\cco 1,N\ccf} K(x_i-x_j)\,,\]
in which case \eqref{eq:mean_field_particle_2} is a classical Langevin diffusion on $\R^{2N}$ with invariant density proportional to $\exp (-|x|^2/2 - U_N(x)-|v|^2/2)$ (under suitable conditions on $K$),  but in general if $K$ is not even then $F$ may not be a gradient, in which case the invariant measure of \eqref{eq:mean_field_particle_2} doesn't have an explicit form  (see the discussion in \cite{M35}). In that case, \eqref{eq:mean_field_particle_2} is  referred to as a \emph{non-equilibrium} Langevin process \cite{Iacobucci}. Notice that in the non-linear equation \eqref{eq:VFP} however the force is always the gradient of $U_f(x) = \int_{\R^2} K(x-y) f(t,y,w)\dd y\dd w$, in particular the stationary solutions of~\eqref{eq:VFP} are solutions of an explicit fixed-point problem, see \cite[Theorem 1.2]{LudoMaxime}.

The long-time convergence of \eqref{eq:VFP} and \eqref{eq:mean_field_particle_2} (for $\lambda$ small enough) is studied in \cite{BolleyGuillinMalrieu}. More precisely,  since it is based on coupling methods, which do not rely on explicit densities, gradient structures or explicit dual operators, \cite{BolleyGuillinMalrieu} considers a general case where the forces are not necessarily of gradient forms. However, it does assume that the interaction force is odd (which, in our settings, corresponds to the case where $K$ is even), but this has little impact in the analysis (see e.g. \cite{Contraction}). We briefly present below how the coupling argument of \cite{BolleyGuillinMalrieu} still works in the non-symmetric case (in fact, this is also done in the recent \cite[Corollary 4.2]{QianRenWang}). This gives a convergence to equilibrium in Wasserstein distance starting from any distribution with a finite second moment, provided $\|\partial_x^2 K\|_\infty$ is small enough, to look at in relation with the result of \cite{LudoMaxime} which requires the solution to be close to stationarity (assuming however that $\|\partial_x K\|_\infty$ is small enough instead of $\|\partial_x^2 K\|_\infty$, so that the results are not directly comparable). This global weak convergence can then be improved  by regularization thanks to \cite{QianRenWang} to a convergence in terms of the relative entropy,  hence to a strong $L^1$ convergence.
 As a conclusion, writing respectively $\|\cdot\|_{TV}$, $\mathcal W_2$, $\mathcal H$ the total variation norm, $L^2$ Wasserstein distance and relative entropy, based on known arguments, the following holds.

\begin{prop}\label{prop:cv}
Assume that
\begin{equation}
\label{eq:lambda_condition}
\lambda \| \partial_x^2 K\|_\infty \leqslant \frac{\min(\gamma,\gamma^{-1})}{8}\,.
\end{equation}
Then there exist $C,a>0$ such that the following hold.
\begin{enumerate}
\item For all $N\geqslant 2$, the process \eqref{eq:mean_field_particle_2} admits a unique invariant measure $\mu_N$  and, for all initial distributions $\nu$ on $\R^{2N}$ and all $t\geqslant 1$,
\begin{equation}
\label{eq:main_particle_tempslong}
\|\nu P_t^N - \mu_N\|_{L^1(\R^{2N})}^2 +  \mathcal H\po \nu P_t^N | \mu_N\pf + \mathcal W_2^2(\nu P_t^N,\mu_N) \leqslant C e^{-at} \mathcal W_2^2 (\nu,\mu_N)\,.
\end{equation}
\item There exists a unique stationary solution $f_*$ of \eqref{eq:VFP} and, for all solution $f$ of \eqref{eq:VFP} and all $t\geqslant 1$,
\begin{equation}
\label{eq:NL_tempslong}
\|f(t,\cdot) - f_*\|_{L^1(\R^2)}^2 + \mathcal H\po f(t,\cdot) | f_*\pf + \mathcal W_2^2(f(t,\cdot),f_*) \leqslant C e^{-at} \mathcal W_2^2 (f(0,\cdot),f_*)\,.
\end{equation}
\end{enumerate}
\end{prop}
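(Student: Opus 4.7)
The plan is to follow the synchronous-coupling scheme of \cite{BolleyGuillinMalrieu}, observe that it is insensitive to the symmetry of $K$, and then use the short-time hypocoercive regularization of \cite{QianRenWang} to upgrade Wasserstein convergence to entropy and $L^1$ convergence. In both parts, existence and uniqueness of the invariant or stationary measure will follow from the resulting $\mathcal W_2$ contraction by a standard Banach fixed-point argument on the space of probability measures with finite second moment.

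For part (1), I would couple two copies $(X_t,V_t)$ and $(Y_t,W_t)$ of \eqref{eq:mean_field_particle_2} driven by the same Brownian motion and study the deterministic evolution of $(\Delta X,\Delta V) := (X-Y, V-W)$ through a quadratic functional
\[
\Phi(x,v) = a|x|^2 + 2b\,x\cdot v + c|v|^2,
\]
with $a,b,c>0$ (equivalent to the Euclidean norm) chosen via the classical linear hypocoercivity computation \cite{Contraction}, so that the unperturbed linear Langevin part $-x-\gamma v$ contributes $-2c_\gamma \Phi(\Delta X,\Delta V)$ with $c_\gamma$ of order $\min(\gamma,\gamma^{-1})$. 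The interaction term $\lambda(F(X)-F(Y))$ is bounded coordinate-wise by $\lambda\|\partial_x^2 K\|_\infty|\Delta X|$, with no dependence on $N$ thanks to the $1/(N-1)$ normalization of $F$; under \eqref{eq:lambda_condition} this perturbation is absorbed in $c_\gamma$ and yields $\Phi(\Delta X_t,\Delta V_t)\leqslant e^{-2at}\Phi(\Delta X_0,\Delta V_0)$ for some $a>0$. Taking expectation along an optimal $\mathcal W_2$-coupling at $t=0$ gives the $\mathcal W_2$ bound in \eqref{eq:main_particle_tempslong}, and the fixed-point argument provides $\mu_N$. For $t\geqslant 1$, combining the short-time regularization $\mathcal H(\eta P_1^N\,|\,\mu_N)\leqslant C\mathcal W_2^2(\eta,\mu_N)$ of \cite{QianRenWang} (applied to $\eta = \nu P_{t-1}^N$, using $\mu_N P_1^N=\mu_N$) with the $\mathcal W_2$ contraction on $[0,t-1]$ produces the entropy bound, and the Csisz\'ar--Kullback--Pinsker inequality then gives the $L^1$ bound.

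Part (2) is obtained by running the same synchronous coupling at the McKean--Vlasov level between two solutions $f_t,g_t$ of \eqref{eq:VFP}: the only new contribution in the drift difference is $F_{f_t}(Y_t) - F_{g_t}(Y_t)$, which is bounded by $\lambda\|\partial_x^2 K\|_\infty \mathcal W_2(f_t,g_t)$ since $F_f$ is the convolution of $\partial_x K$ with the $x$-marginal of $f$. Taking expectation of $\Phi(\Delta X_t,\Delta V_t)$ and closing a Gr\"onwall inequality on $\mathcal W_2^2(f_t,g_t)$ yields the $\mathcal W_2$ contraction, hence the existence and uniqueness of $f_*$ and the $\mathcal W_2$ part of \eqref{eq:NL_tempslong}; the regularization of \cite{QianRenWang} then upgrades this to entropy and $L^1$ convergence exactly as in part (1). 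The step I expect to be the most delicate is the short-time regularization, because the kinetic Fokker--Planck operator is hypoelliptic rather than elliptic, so no standard parabolic smoothing is available and one genuinely needs the hypocoercive reverse log-Sobolev-type estimate of \cite{QianRenWang}. By contrast, the coupling computation itself uses neither the reciprocity of forces nor any gradient structure, which is exactly why it extends without modification to the non-symmetric case.
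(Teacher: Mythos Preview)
Your proposal is correct and, for part~(1), essentially identical to the paper's argument: a twisted quadratic form on $(\Delta X,\Delta V)$, synchronous coupling, Banach fixed point, then hypocoercive Wasserstein-to-entropy regularization (the paper cites \cite{GuillinWang} rather than \cite{QianRenWang} for the linear particle system, but either works) and Pinsker. One minor imprecision: the interaction increment is not bounded ``coordinate-wise by $\lambda\|\partial_x^2 K\|_\infty|\Delta X_i|$'' since $F_i$ depends on all $x_j$; what one actually shows is the operator-norm bound $|\nabla F|\leqslant 2\|\partial_x^2 K\|_\infty$, i.e.\ $|F(X)-F(Y)|\leqslant 2\|\partial_x^2 K\|_\infty|X-Y|$ in $\R^N$, which is what your Gr\"onwall needs.

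For part~(2) you take a genuinely different route. You run the synchronous coupling \emph{directly at the McKean--Vlasov level}, splitting the drift difference as $[F_{f_t}(X_t)-F_{f_t}(Y_t)]+[F_{f_t}(Y_t)-F_{g_t}(Y_t)]$ and closing a Gr\"onwall on $\mathcal W_2^2(f_t,g_t)$; this is the original \cite{BolleyGuillinMalrieu} strategy and is arguably the simplest self-contained proof. The paper instead \emph{derives} the non-linear contraction from the particle one: it invokes a finite-$N$ propagation-of-chaos bound $\mathcal W_2(f^{\otimes N}(t),f^{\otimes N}(0)P_t^N)\leqslant C_t$ from \cite{GuillinMonmarche}, divides the particle $\mathcal W_2$ contraction by $N$, and lets $N\to\infty$. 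Your approach avoids the detour through propagation of chaos and needs no external lemma beyond the regularization; the paper's approach has the conceptual advantage of showing explicitly that the non-linear contraction is the mean-field limit of the uniform-in-$N$ particle contraction, which ties the two items of the proposition together.
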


\begin{rem}
In view of the deterministic contraction established along  the proof of Proposition~\ref{prop:cv},  from \cite[Proposition 3]{Contraction}, under \eqref{eq:lambda_condition}, we also get that $\mu_N$ satisfies a so-called log-Sobolev inequality with a constant uniform in $N$. See \cite{Delgadino} on  that topic.
\end{rem}

In the remaining we assume that \eqref{eq:mean_field_particle_2} admits an invariant measure  $\mu_N$  (which is thus the case under the condition \eqref{eq:lambda_condition} or, for instance, using classical Lyapunov arguments, if $\partial_x K$ is bounded or if $K$ is convex outside some compact interval), and we focus on a free energy defined   for any probability measure $\nu$ on $\R^2$ as
\[\mathcal F(\nu) = \liminf_{N\rightarrow \infty}\frac1N \mathcal H\po\nu^{\otimes N}|\mu_N\pf\,.\]
In the non-interacting case were $K=0$, $\mu_N = \mu_1^{\otimes N}$ and we simply get $\frac1N \mathcal H(\nu^{\otimes N}|\mu_N) = \mathcal H(\nu|\mu_1)=\mathcal F(\nu)$. More generally, according to \cite{GuillinWuZhang}, when $K$ is even, $\mathcal F(\nu) = \mathcal E(\nu) + C$ for some constant $C $ (the free energy is in fact always defined up to an additive constant). This is not the case when $K$ is not even and, by contrast to \cite[Corollary 3.6]{LudoMaxime},  $\mathcal F$ is a Lyapunov functional for \eqref{eq:VFP}:

\begin{prop}\label{prop:Lyapunov}
For any $f$ solution to \eqref{eq:VFP}, $t\mapsto \mathcal F(f(t,\cdot))$ is non-increasing. 
\end{prop}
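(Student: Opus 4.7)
The strategy is to combine the contraction of relative entropy along the $N$-particle Markov semigroup $(P_t^N)$ with a propagation-of-chaos / large-deviations argument, so as to transfer the statement from the linear Markov dynamics on $\R^{2N}$ to the non-linear VFP flow. Fix $0\leq s<t$; the aim is to prove $\mathcal F(f(t,\cdot))\leq \mathcal F(f(s,\cdot))$. Since $\mu_N$ is invariant under $(P_t^N)$, the classical contraction of relative entropy under a Markov kernel preserving its invariant measure yields, for every $N\geq 2$,
\[\mathcal H\!\left(f(s,\cdot)^{\otimes N}P_{t-s}^N\,\big|\,\mu_N\right)\;\leq\;\mathcal H\!\left(f(s,\cdot)^{\otimes N}\,\big|\,\mu_N\right).\]
Dividing by $N$ and taking $\liminf_{N\to\infty}$, the right-hand side gives $\mathcal F(f(s,\cdot))$ by the very definition of $\mathcal F$, so it is enough to establish
\[\mathcal F(f(t,\cdot))\;\leq\;\liminf_{N\to\infty}\tfrac{1}{N}\mathcal H\!\left(\rho^N\,\big|\,\mu_N\right),\qquad\rho^N:=f(s,\cdot)^{\otimes N}P_{t-s}^N\,.\]

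To obtain this lower bound I would compare $\rho^N$ with $f(t,\cdot)^{\otimes N}$ by means of the chain rule
\[\mathcal H(\rho^N|\mu_N)-\mathcal H(f(t,\cdot)^{\otimes N}|\mu_N)\;=\;\mathcal H(\rho^N|f(t,\cdot)^{\otimes N})+\int(\rho^N-f(t,\cdot)^{\otimes N})\log\frac{f(t,\cdot)^{\otimes N}}{\mu_N}\,.\]
The first term on the right is nonnegative. In the integral, use $\log(f(t,\cdot)^{\otimes N}/\mu_N)=\sum_{i=1}^N\log f(t,x_i,v_i)-\log\mu_N$. By exchangeability the $\log f$ contribution divided by $N$ reduces to $\int(\rho^{N,(1)}-f(t,\cdot))\log f(t,\cdot)$, where $\rho^{N,(1)}$ denotes the first marginal of $\rho^N$; this tends to $0$ by the one-particle propagation of chaos $\rho^{N,(1)}\to f(t,\cdot)$ which in our setting follows from the coupling arguments of \cite{BolleyGuillinMalrieu,QianRenWang}, combined with a suitable integrability of $\log f(t,\cdot)$ ensured by the regularity of solutions to \eqref{eq:VFP}.

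The delicate piece is the $-\log\mu_N$ contribution, since $\mu_N$ has no closed form in the non-symmetric setting. To handle it I would appeal to the Sanov-type large-deviations interpretation of $\mathcal F$ underlying its very definition (compare \cite{GuillinWuZhang} for the symmetric case): for any exchangeable sequence $\rho^N$ whose first marginal converges weakly to $\nu$, one has $\liminf_N \tfrac{1}{N}\mathcal H(\rho^N|\mu_N)\geq \mathcal F(\nu)$. Applied at $\nu=f(t,\cdot)$ this closes the argument. Alternatively one may invoke the quantitative entropic propagation of chaos of \cite{QianRenWang} to get $\mathcal H(\rho^N|f(t,\cdot)^{\otimes N})=o(N)$, together with a Donsker--Varadhan-type bound controlling the $\log\mu_N$ integral. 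Either way, the main obstacle, and the only step beyond routine Markov-entropy manipulations, is this LDP-type lower semicontinuity for $\tfrac{1}{N}\mathcal H(\cdot|\mu_N)$ at product measures in the non-symmetric regime where $\mu_N$ is implicit.
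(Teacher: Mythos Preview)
Your approach has a genuine gap at precisely the step you flag as delicate, and the paper's proof avoids that step by a different route. You evolve $f(s,\cdot)^{\otimes N}$ under the \emph{particle} semigroup $P_{t-s}^N$, use exact entropy contraction, and then need
\[
\mathcal F(f(t,\cdot))\leqslant \liminf_{N\to\infty}\frac1N\,\mathcal H\!\left(\rho^N\,\big|\,\mu_N\right),\qquad \rho^N=f(s,\cdot)^{\otimes N}P_{t-s}^N.
\]
But $\mathcal F$ is \emph{defined} only through product measures $\nu^{\otimes N}$, and $\rho^N$ is not a product. The ``Sanov-type lower semicontinuity'' you invoke for exchangeable sequences with converging first marginal is not a consequence of the definition, is not proven in the paper, and in the non-symmetric case where $\mu_N$ is implicit it is not available from \cite{GuillinWuZhang} either. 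Your chain-rule decomposition does not help: the term $\int(\rho^N-f(t,\cdot)^{\otimes N})\log\mu_N$ involves a genuinely $N$-body, non-explicit function, and neither propagation of chaos for the one-particle marginal nor $\mathcal H(\rho^N|f(t,\cdot)^{\otimes N})=o(N)$ controls it. (Even the ``easy'' piece $\int(\rho^{N,(1)}-f(t,\cdot))\log f(t,\cdot)\to 0$ needs more than weak convergence, since $\log f(t,\cdot)$ is unbounded.) So as written, the argument is circular: the missing lower bound is at least as hard as the proposition itself.

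The paper sidesteps this entirely by never leaving product measures. It computes directly $\partial_t\,\mathcal H\!\left(f(t,\cdot)^{\otimes N}\,\big|\,\mu_N\right)$, viewing $f(t,\cdot)^{\otimes N}$ as the law of $N$ \emph{independent} copies of the nonlinear process with time-inhomogeneous generator $L_{t,N}$. Writing $L_{t,N}=L_N+\lambda\, b\cdot\nabla_v$ with
\[
b_i(x)=\frac{1}{N-1}\sum_{j\neq i}\big(\partial_x K(x_i-x_j)-\partial_x K\star f(t,\cdot)(x_i)\big),
\]
the $L_N$-part gives the usual nonpositive entropy production, and Cauchy--Schwarz reduces the remainder to $\tfrac{\lambda^2}{4}\int|b|^2 f(t,\cdot)^{\otimes N}$. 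Since the summands in $b_i$ are centred and independent under $f(t,\cdot)^{\otimes N}$, a variance computation yields $\int|b|^2 f(t,\cdot)^{\otimes N}=O(1)$ uniformly in $N$. Hence $\mathcal H(f_t^{\otimes N}|\mu_N)\leqslant \mathcal H(f_s^{\otimes N}|\mu_N)+C_t$ with $C_t$ independent of $N$; dividing by $N$ and taking $\liminf$ finishes. The key idea you are missing is to compare generators (nonlinear i.i.d.\ vs.\ interacting) rather than to compare laws after evolution.
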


 It is interesting to consider the very simple case where $K(x) =ax^2 + bx $ for some $a,b\in\R$, and $\lambda=1$.  Indeed, in that case,   \eqref{eq:mean_field_particle_2} is an equilibrium Langevin diffusions with equilibrium density proportional to $\exp (-|\mathbf{x}|^2/2 - W_N(\mathbf{x})-|\mathbf{v}|^2/2)$ with 
\[W_N(x_1,\dots,x_N) = \frac{a}{2(N-1)}\sum_{i, j\in\cco 1,N\ccf}(x_i-x_j)^2 + b \sum_{i=1}^N x_i\,. \] 
In fact, the corresponding non-linear equation \eqref{eq:VFP} can be as a Vlasov-Fokker-Planck equation with even interaction potential $a x^2$ and with the external potential $x^2/2 + bx$ (instead of simply $x^2/2$ as in \eqref{eq:VFP}). From \cite{GuillinWuZhang} we get that, in that case,
\begin{multline}
\mathcal F(f) = \int_{\R^2} f(x,v) \ln f(x,v) \dd x \dd v  + \int_{\R^2} \po \frac{v^2 + x^2}{2} + bx\pf f(x,v)\dd x \dd v   \\
+ a \int_{\R^4} (x-y)^2 f(x,v) f(y,w)\dd x\dd v \dd y \dd w +C \,,\label{eq:nonlinF}
\end{multline}
for some constant $C\in\R$. In particular, when $b\neq 0$, we do not recover $\mathcal E$. When $a=0$, the particles \eqref{eq:mean_field_particle} are independent, so that  $\mathcal F(f) = \mathcal H(\nu|\mu_1)$. However, due to the non-linear term in \eqref{eq:nonlinF}, this is not the case when $a\neq 0$. As a conclusion, when both $a$ and $b$ are non-zero, the free energy is neither the usual free energy $\mathcal E$ nor the relative entropy with respect to the stationary solution $\bar{\mu}$, but something else. In the general non-Gaussian non-symmetric case, a priori we do not have an explicit formula for $\mathcal F$. 


\bigskip

Finally, if, instead of  the free energy, we focus on the non-linear Fisher information which classically appears as the free energy dissipation in the symmetric interaction case (at least in the elliptic case, i.e. the granular media equation, which is the overdamped limit of \eqref{eq:VFP} as $\gamma\rightarrow \infty$ when accelerating time by a factor $\gamma$ and averaging out the velocities), we see that we can obtain a contraction result in the spirit of  \cite{CMCV} for an explicit quantity of the same nature. More precisely, for a $2\times 2$ matrix $A$ and a probability density $f$, let
\[ \mathcal I_A(f) = \int_{\R^{2}} \left|A \na \ln \frac{f}{\hat f}\right|^2 f \]
where
\begin{equation}
\label{eq:fhat}
\hat f(x,y)=  \frac{e^{-\frac{1}{2}|x|^2 - \lambda K\star f(x) -\frac{1}{2}|v|^2  }}{Z_f}\,,\qquad Z_f = \int_{\R^{2}} e^{-\frac{1}{2}|x|^2 - \lambda K\star f(x) -\frac{1}{2}|v|^2  }\dd x \dd v
\end{equation}
stands for the invariant measure of the Markov generator
\[L_f = v \partial_x - (x+\partial_x K\star f(x)+\gamma v)\partial_v + \gamma \partial_v^2\]
(here we write $K\star f(x,v) = \int_{\R^2} K(x-y) f(y,w)\dd y \dd w$).
\begin{prop}\label{prop:Fisher}
There exist $c>0$ and an invertible $2\times 2$ matrix $A$ (depending only on $\gamma$) such that, under the condition \eqref{eq:lambda_condition}, for any solution $f$ of \eqref{eq:VFP} and any $t\geqslant \geqslant 0$,
\[  \mathcal I_A\po f(t,\cdot)\pf \leqslant e^{- c(t-s)}  \mathcal I_A\po f(s,\cdot)\pf\,. \]
\end{prop}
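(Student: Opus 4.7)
The plan is to adapt the Bakry--\'Emery/Villani hypocoercive $\Gamma_2$ calculus for a Fisher information to this kinetic, nonlinear setting, in the spirit of \cite{CMCV} but with the kinetic modifications required by the degenerate noise. The key observation is that if the reference density $\hat f_t$ were frozen, the VFP equation would reduce to a \emph{linear} kinetic Fokker--Planck flow with external potential $\tfrac{x^2}{2}+\lambda(K\star f_t)(x)$, which under \eqref{eq:lambda_condition} is uniformly strictly convex (the Hessian $1+\lambda\partial_x^2 K\star f_t$ being bounded between two positive constants independent of $t$). Villani's hypocoercive Fisher-information computation then applies and provides an invertible $2\times 2$ matrix $A$ and a constant $c_0>0$ depending only on $\gamma$.

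Setting $\phi_t := \ln(f_t/\hat f_t)$, so that $\mathcal I_A(f_t) = \int |A\nabla\phi_t|^2 f_t\,\dd x\dd v$, the main computation is a chain-rule decomposition
\[
\frac{\dd}{\dd t}\mathcal I_A(f_t) \;=\; \mathrm{(I)} \;+\; \mathrm{(II)}, \qquad
\mathrm{(II)} := -2\int (A^TA\,\nabla\phi_t)\cdot \nabla(\partial_t \ln\hat f_t)\,f_t\,\dd x\dd v,
\]
where $\mathrm{(I)}$ is the derivative of $\mathcal I_A$ with $\hat f$ held frozen; this is exactly the Fisher-information derivative along the linear KFP semigroup $e^{\tau L_{f_t}^\ast}$ at $\tau=0$, and Villani's estimate yields $\mathrm{(I)} \leq -c_0\,\mathcal I_A(f_t)$ (the log-version of the hypocoercive Fisher decay, analogous to what is used in \cite{Contraction}). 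Term $\mathrm{(II)}$ is the nonlinear correction coming from $\hat f_t$ also evolving.

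To control $\mathrm{(II)}$, I use $\ln\hat f(x,v)=-\tfrac{1}{2}(x^2+v^2)-\lambda(K\star f)(x)-\ln Z_f$; since $\partial_t\ln Z_f$ is independent of $(x,v)$, only $\partial_x\partial_t \ln\hat f$ survives in the gradient. The continuity equation $\partial_t \rho = -\partial_x J$ (with $\rho=\int f\,\dd v$, $J=\int v f\,\dd v$) gives $\partial_t(\partial_x K\star f) = -(\partial_x^2 K)\star J$, hence
\[
\mathrm{(II)} \;=\; -2\lambda \int (\partial_x^2 K\star J)(x)\left[\int (A^TA\nabla\phi_t)_1\, f_t\,\dd v\right]\dd x.
\]
Computing the inner velocity integral by integration by parts (using $\partial_v\hat f = -v\hat f$ and $\partial_x\hat f = -(x+\lambda\partial_x K\star f)\hat f$) reduces it to a linear combination of $\partial_x\rho$, $(x+\lambda\partial_x K\star f)\rho$ and $J$; combining with $\|(\partial_x^2 K)\star J\|_\infty \leq \|\partial_x^2 K\|_\infty\|J\|_{L^1}$ and Cauchy--Schwarz, together with the moment bounds on $f_t$ that follow from the quadratic confinement in \eqref{eq:VFP}, one shows $|\mathrm{(II)}| \leq C\lambda\|\partial_x^2 K\|_\infty\,\mathcal I_A(f_t)$ for some $C$ depending only on $\gamma$ and on $A$. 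Under \eqref{eq:lambda_condition} (possibly after adjusting constants), $c := c_0 - C\lambda\|\partial_x^2 K\|_\infty > 0$, and Gr\"onwall gives the announced contraction.

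The main obstacle is the absorption of $\mathrm{(II)}$: since $J$ is a velocity-moment of $f_t$ rather than a derivative of $\ln f_t$, a naive Cauchy--Schwarz would only produce a bound of the form $\lambda\|\partial_x^2 K\|_\infty\sqrt{\mathcal I_A(f_t)}$ times an uncontrollable moment. The crucial point is that the inner $\dd v$-integration of $(A^TA\nabla\phi_t)_1$ against $f_t$ gives back a first-order spatial derivative structure ($\partial_x \rho$, etc.), which compensates and allows the final bound to be linear in $\mathcal I_A(f_t)$. Identifying the correct matrix $A$ which simultaneously yields the hypocoercive estimate in (I) and is compatible with this absorption mechanism in (II) is where the technical work lies; the answer is the same Villani matrix that appears in \cite{Contraction}, since the convex perturbation of the potential does not change the structure of the hypocoercive $\Gamma_2$ bound.
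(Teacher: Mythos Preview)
The decomposition $\mathrm{(I)}+\mathrm{(II)}$ and the treatment of $\mathrm{(I)}$ match the paper. The gap is in your handling of $\mathrm{(II)}$. You bound $\|(\partial_x^2 K)\star J\|_\infty \leq \|\partial_x^2 K\|_\infty \|J\|_{L^1}$ and then invoke moment bounds on $f_t$, claiming that the inner $\dd v$-integral ``compensates'' so that the final estimate is linear in $\mathcal I_A$. But the inner integral, once rewritten as a combination of $\partial_x\rho + (x+\lambda\partial_x K\star f)\rho$ and $J$, is controlled (via Cauchy--Schwarz in $v$ and then in $x$) only by $C\sqrt{\mathcal I_A(f_t)}$; paired with a moment bound on $\|J\|_{L^1}$, this produces exactly the ``naive'' estimate $|\mathrm{(II)}| \leq C\lambda\|\partial_x^2 K\|_\infty \cdot(\text{moment})\cdot\sqrt{\mathcal I_A(f_t)}$ that you yourself flagged as insufficient. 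The resulting differential inequality $\partial_t \mathcal I_A \leq -c_0 \mathcal I_A + M\sqrt{\mathcal I_A}$ does not yield exponential decay to zero.

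What is missing is that the \emph{outer} $J$ is itself a Fisher-type quantity: since $\partial_v\ln\hat f_t = -v$ and $\int \partial_v f_t\,\dd v=0$, one has $J(y) = \int v f_t\,\dd v = \int (\partial_v\phi_t)\, f_t\,\dd v$, and hence by Cauchy--Schwarz $|(\partial_x^2 K\star J)(x)| \leq \|\partial_x^2 K\|_\infty \big(\int|\partial_v\phi_t|^2 f_t\big)^{1/2}$ pointwise in $x$. This supplies the second factor of $\sqrt{\mathcal I_A}$ and closes the estimate with constants depending only on $\gamma$ --- no moment bounds are needed, which is essential since a moment-dependent constant could not be absorbed under the smallness condition~\eqref{eq:lambda_condition}. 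This is precisely the paper's route: the chain of integrations by parts starting from $w\hat f_t=-\partial_w\hat f_t$ rewrites $\partial_t\nabla_x\ln\hat f_t(x) = \lambda\int\partial_x^2 K(x-y)\,\partial_w\phi_t(y,w)\,f_t(y,w)\,\dd y\,\dd w$, after which Cauchy--Schwarz gives $|\mathrm{(II)}|\leq 2\lambda'\big(\mathcal I_A(f_t)\int|\partial_v\phi_t|^2 f_t\big)^{1/2}$ directly.
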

Using the specific form of $A,c$ given in the proof of this result, we can get for the full non-linear Fisher information (i.e. the case $A=I$)  the explicit exponential decay
\[  \mathcal I_{I}\po f(t,\cdot)\pf  \leqslant 4 \exp \po - \min(\gamma,\gamma^{-1}) \frac{t}8 \pf  \mathcal I_{I}\po f(0,\cdot)\pf\,. \] 

\section{Proofs}

\begin{proof}[Proof of Proposition~\ref{prop:cv}]
We start by the analysis of the particle system \eqref{eq:mean_field_particle_2}. Denoting by $\na F$ the Jacobian matrix of $F$ and $|\na F|$ its operator norm, we see that, for all $x,u\in\R^{n}$,
\begin{eqnarray*}
|\na F(x) u|^2 & =& \frac{1}{(N-1)^2}\sum_{i=1}^n \left|\sum_{j\neq i} \partial_x^2 K(x_i-x_j)(u_i-u_j) \right |^2 \\
& \leqslant  & \frac{\|\partial_x^2 K\|_\infty^2}{N-1}\sum_{i=1}^n \sum_{j\neq i} |u_i-u_j|^2 \ \leqslant  \  4\|\partial_x^2 K\|_\infty^2 \sum_{i=1}^n  |u_i|^2\,,
\end{eqnarray*}
namely $|\na F(x)|\leqslant 2\|\partial_x^2 K\|_\infty$. Let $(Z_t)_{t\geqslant 0}= (X_t,V_t)_{t\geqslant 0}$ and  $(\tilde Z_t)_{t\geqslant 0}= (\tilde X_t,\tilde V_t)_{t\geqslant 0}$ be two solutions of \eqref{eq:mean_field_particle_2} with different initial conditions but driven by the same Brownian motion. For a parameter $a\in\R$ to be chosen later on, we set 
\[P_t = X_t - \tilde X_t + a \po V_t - \tilde V_t\pf\,,\qquad Q_t = V_t-\tilde V_t\,.\]
Straightforward computations give, for any $b>0$,
\begin{multline}
\frac12 \partial_t \po |P_t|^2 + b|Q_t|^2 \pf
= -a |P_t|^2 + b(a-\gamma)|Q_t|^2 + (1+a^2 - a\gamma-b) P_t\cdot Q_t \\
- \lambda(aP+bQ) \cdot \po F(X_t) - F(\tilde X_t)\pf \,.\label{eq:mult}
\end{multline}
Bounding $|F(X_t) - F(\tilde X_t)| \leqslant 2\|\partial_x^2 K\|_\infty |P-aQ| $, we take for simplicity  $a= \min(\gamma,\gamma^{-1})/2 $ and $b= 1+a^2 - a\gamma$ to get, writing $\lambda' = 2 \|\partial_x^2 K\|_\infty \lambda$, 
\begin{eqnarray*}
\frac12 \partial_t  \po |P_t|^2 + b|Q_t|^2 \pf &\leqslant& -a |P_t|^2 - \frac{b\gamma}2 |Q_t|^2 + \lambda' |aP+bQ||P-aQ|  \\
& \leqslant & -a(1-\lambda') |P_t|^2 - \po \frac{b\gamma}2 - \lambda' ab \pf  |Q_t|^2 + \lambda'(a^2+b)|P_t||Q_t| \\ 
& \leqslant & -\frac{3}{4}a  |P_t|^2 - \frac{3b\gamma}8  |Q_t|^2 +  \frac34\lambda' \po  |P_t|^2 + |Q_t|^2\pf\,, 
\end{eqnarray*}
where we used that $a=\min(\gamma,\gamma^{-1})/2 \leqslant 1/2$, that $b+a^2 \leqslant 3/2$ and that $\lambda'\leqslant 1/4$. Using that $b\in[ 1/2,5/4]$ and $\lambda' \leqslant a/4 \leqslant b \gamma/4$ gives
\[ \frac12 \partial_t   \po |P_t|^2 + b|Q_t|^2 \pf \leqslant -\frac{a}{2}   |P_t|^2 - \frac{b\gamma}{16}  |Q_t|^2 \leqslant - \frac{a}{8}\po |P_t|^2 + b|Q_t|^2\pf\,. \] 
Going back to the Euclidean coordinates  using the equivalence between norms leads to
 \[ |Z_t - \tilde Z_t|^2 \leqslant \frac{ \max(2,b+2a^2)}{\min \po \frac12,\frac{b}{1+2 a^2}\pf} e^{-\frac{a}{4} t } |Z_0 - \tilde Z_0|^2 = 4  e^{-\frac{a}{4} t } |Z_0 - \tilde Z_0|^2\,. \] 
This classically implies
\begin{equation}
\label{eq:W2contract}
\mathcal W_2(\nu P_t^N, \tilde \nu P_t^N) \leqslant 2  e^{-\frac{a}{8} t } \mathcal W_2(\nu P_t, \tilde \nu P_t)
\end{equation}
for all initial distributions $\nu,\tilde\nu$ on $\R^{2N}$, and then the existence of a unique invariant measure $\mu_N$ by the Banach fixed-point theorem and a semi-group argument. Moreover, thanks to \cite[Corollary 4.7]{GuillinWang}, there exists a constant $C$ such that   
\[\mathcal H\po \nu P_t^N | \mu_N\pf \leqslant C \mathcal W_2^2 (\nu,\mu_N)\,,\]
for all distribution $\nu$  on $\R^{2N}$ and all $t\geqslant 1$ (Moreover the constant $C$ does not depend on $N$ here, as mentionned after \cite[Proposition 15]{GuillinMonmarche}). Combining this with \eqref{eq:W2contract} and Pinsker's inequality concludes the proof of  \eqref{eq:main_particle_tempslong}.

Using a synchronous coupling as in \cite[Proposition 11]{GuillinMonmarche} (which doesn't rely on the fact the force is a gradient and thus applies readily in the present case) we get that, for any initial density $f\in\mathcal P_2(\R^{2})$ (the set of probability measures with finite second moment) and any $t>0$, there exists $C_t >0$ such that
\[\mathcal W_2\po f^{\otimes N}(t,\cdot), f^{\otimes N}(0,\cdot) P_t^N \pf \leqslant C_t\,.\]
Dividing \eqref{eq:W2contract} by $N$ (with $\nu=f^{\otimes N}(0,\cdot)$, $\tilde \nu= \tilde f^{\otimes N}(0,\cdot)$), using that $\mathcal W_2^2 (f^{\otimes N},\tilde f^{\otimes N}) = N \mathcal W_2^2(f,\tilde f)$, sending $N\rightarrow \infty$ gives
\[\mathcal W_2(f(t,\cdot), \tilde f(t,\cdot)  ) \leqslant 2  e^{-\frac{a}{8} t } \mathcal W_2(f(0,\cdot), \tilde f(0,\cdot))\]
for any solutions $f,\tilde f$ of \eqref{eq:VFP} with initial condition in $\mathcal P_2(\R^{2})$. Again, this implies the existence of a unique stationary solution for \eqref{eq:VFP} by a fixed-point argument, and the proof of \eqref{eq:main_particle_tempslong} is concluded by applying the Wasserstein/entropy regularization for \eqref{eq:VFP} stated in  \cite[Theorem 4.1]{QianRenWang}. 
\end{proof}
 
 \begin{proof}[Proof of Proposition~\ref{prop:Lyapunov}]
Fix a solution $f$ to \eqref{eq:VFP}. For $N\in\N$, $t\geqslant0$, let $L_{t,N}$ be the time-inhomogeneous Markov generator on $\R^{2N}$
given by
\[L_{t,N}  = x\cdot \na_v -(x+\gamma v) \cdot \na_v - \sum_{i=1}^N F_{f}(t,x_i) \cdot \na_{v_i} + \gamma \Delta_v  \,,  \]
which corresponds to $N$ independent copies of the solution on $\R^{2}$ of
\[
 \left\{
\begin{array}{rcl}
\dd X_t &=& V_t\dd t\\
\dd V_t &=  & -\po X_t + \gamma V_t + \lambda F_f(t,X_t)\pf \dd t + \sqrt{2} \dd B_t\,. 
\end{array}
\right.
\]
Since $f$ solves \eqref{eq:VFP}, $(X_t,V_t) \sim f(t,\cdot)$ if $(X_0,V_0) \sim f(0,\cdot)$. In particular, for any nice function $g$ on $\R^{2N}$,
\[ \partial_t \int_{\R^{2N}} g f^{\otimes N}(t,\cdot) = \int_{\R^{2N}} L_{t,N} g f^{\otimes N}(t,\cdot)\,.\]
Similarly, let $L_N$ be the generator associated with the interacting particle system \eqref{eq:mean_field_particle_2}, i.e.
\[L_N = x\cdot \na_v -(x+\gamma v) \cdot \na_v - \frac\lambda N \sum_{i,j=1}^N \partial_x K(x_i-x_j) \cdot \na_{v_i} + \gamma \Delta_v\,. \]
Since $\mu_N$ is invariant for $L_N$, $\int L_N g \mu_N = 0$ for all suitable $g$. Then, using that mass is preserved along \eqref{eq:VFP} so that $\int \partial_t f = 0$,
\begin{eqnarray*}
\partial_t \int_{\R^{2N}}  f^{\otimes N} \ln \frac{f^{\otimes N}}{\mu_N} & =&   \int_{\R^{2N}} f^{\otimes N} L_{f,N} \po   \ln \frac{f^{\otimes N}}{\mu_N} \pf  + \partial_t \po f^{\otimes N}\pf \\
& = & \int_{\R^{2N}}  f^{\otimes N} \po L_{f,N} - L_N\pf \po   \ln \frac{f^{\otimes N}}{\mu_N} \pf\\ 
& & \ + \int_{\R^{2N}} \co \frac{f^{\otimes N}}{\mu_N} L_{N} \po   \ln \frac{f^{\otimes N}}{\mu_N} \pf - L_N \po \frac{f^{\otimes N}}{\mu_N} \ln \frac{f^{\otimes N}}{\mu_N}\pf\cf \mu_N  \,.
\end{eqnarray*}
Denoting by $(*)$ and $(**)$ respectively the integrals in the  last two lines of this equation, we recognize for the latter the classical entropy dissipation for \eqref{eq:mean_field_particle_2}, for which standard computations give
\[(**) \leqslant - \int_{\R^{2N}} \left| \na_v \ln \frac{f_t^{\otimes N}}{\mu_N}\right |^2 f_t^{\otimes N}\,.\]
For the former, denoting $b=(b_1,\dots,b_N)$ with
\[b_i(x) =  \frac1  {N-1} \sum_{j\in\cco 1,N\ccf \setminus\{i\}} \po \partial_x K(x_i-x_j) - \partial_x K \star f(x_i)\pf \,,  \]
we get
\[(*) = \lambda \int_{\R^{2N}}  f^{\otimes N} b \cdot \na_v    \ln \frac{f^{\otimes N}}{\mu_N}\,. \]
Using the Cauchy-Schwarz inequality,
\[\partial_t \int_{\R^{2N}}  f^{\otimes N} \ln \frac{f^{\otimes N}}{\mu_N} \leqslant \frac{\lambda^2} 4 \int_{\R^{2N}}  |b|^2 f^{\otimes N} \,.   \] 
 For $i\in\cco 1,N\ccf$, using the independence of particles and that $x_j\mapsto \partial_x K(x_i-x_j) - \partial_x K \star f(x_i)$ is centered under the law $f$,
 \begin{eqnarray*}
 \int_{\R^{2N}}  |b_i|^2 f^{\otimes N} &=& \frac1{(N-1)^2} \sum_{j\in\cco 1,N\ccf\setminus\{i\}} \int_{\R^{2N}} \po \partial_x K(x_i-x_j) - \partial_x K \star f(x_i)\pf^2 \\
 & \leqslant & \frac2{N-1} \|\partial_x^2 K\|_\infty \int_{\R^2} x^2 f(t,x,v)\dd x\dd v\,.
 \end{eqnarray*}
 It is easily seen that $\partial_t \int_{\R^2} |z|^2 f(t,z)\dd z \leqslant C \int_{\R^2} |z|^2 f(t,z)\dd z$ for some $C$. As a conclusion, for $t\geqslant s\geqslant 0$,
 \[\int_{\R^{2N}}  f_t^{\otimes N} \ln \frac{f_t^{\otimes N}}{\mu_N}  \leqslant \int_{\R^{2N}}  f_s^{\otimes N} \ln \frac{f_s^{\otimes N}}{\mu_N} + C_t\]
 for some $C_t\geqslant 0$. Dividing by $N$ and taking the $\liminf$ concludes.
 \end{proof}

 \begin{proof}[Proof of Proposition~\ref{prop:Fisher}]
Throughout the proof we keep computations formal, assuming sufficient regularity and integrability. See the approximation argument of \cite{Songbo} for a rigorous justification of similar computations.
 
We decompose 
\[\partial_t \int_{\R^{2}} \left|A \na \ln \frac{f_t}{\hat f_t} \right|^2 f_t = \partial_s \int_{\R^{2}} \left|A\na \ln \frac{f_{t+s}}{\hat f_t} \right|^2 f_{t+s} + \partial_s \int_{\R^{2}} \left|A\na \ln \frac{f_t}{\hat f_{t+s}} \right|^2 f_t = (\star)+(\star\star)\,. \]
Since $\hat f_t$ is the invariant measure of $L_{f_t}$, the first part leads to classical computations (e.g. \cite[Lemma 8]{Gamma}) which yield
\[(\star) \leqslant  2 \int_{\R^{2}}  \na \ln \frac{f_t}{\hat f_t} \cdot A^T A J   \na  \ln \frac{f_t}{\hat f_t}   f_t \,,\]
where
\[J(x) = \begin{pmatrix}
0 &  1 + \lambda \partial_x F_{f_t}(x) \\- 1 & -\gamma 
\end{pmatrix} =: J_0 + \lambda \partial_x F_{f_t}(x) \begin{pmatrix}
0 & 1 \\ 0 & 0
\end{pmatrix} \]
is the Jacobian matrix of the drift of 
\[L_{f_t}^* =- v\cdot \na_x + (x+\partial_x K\star f(x)-\gamma v)\cdot\na_v + \gamma \Delta_v\,,\]
the dual of $L_{f_t}$ in $L^2 (\hat f_t)$. We bound
\[(\star)  \leqslant  2\int_{\R^{2}} \co   \na \ln \frac{f_t}{\hat f_t} \cdot A^T A J_0   \na  \ln \frac{f_t}{\hat f_t}    + \lambda' \left| A  \na \ln \frac{f_t}{\hat f_t} \right| \left|  \na_v \ln \frac{f_t}{\hat f_t}\right|\cf f_t \]
with $\lambda' = \lambda\|\partial_x^2 K\|_\infty \sqrt{A_{11}^2+A_{21}^2}$. 

 We now deal with $(\star\star)$. Note that this question is addressed in the proof of Theorem~2.1 of \cite{Songbo}. We give slightly different  computations. First, from the explicit expression \eqref{eq:fhat} of $\hat f_t$,  $\partial_t \na_v \ln \hat f_t(x,v)=0$ and
\begin{eqnarray*}
\partial_t \na_x \ln \hat f_t(x,v)& =& - \lambda\int_{\R^2} \partial_x K(x-y) \partial_t f_t (y,w) \dd y \dd w \\
& = &  -\lambda \int_{\R^2} L_{f_t} \po (y,w) \mapsto \partial_x K(x-y)\pf  f_t (y,w) \dd y \dd w\\
& = &  \lambda \int_{\R^2} w \partial_x^2 K(x-y)  f_t (y,w) \dd y \dd w\\
& = &  \lambda \int_{\R^2} w \partial_x^2 K(x-y)  \frac{f_t (y,w)}{\hat f_t(y,w)} \hat f_t(y,w) \dd y \dd w\\
& = &  - \lambda \int_{\R^2}  \partial_x^2 K(x-y)  \frac{f_t (y,w)}{\hat f_t(y,w)} \na_w \hat f_t(y,w) \dd y \dd w\\
& = &\lambda   \int_{\R^2}  \partial_x^2 K(x-y) \na_w  \frac{f_t (y,w)}{\hat f_t(y,w)}  \hat f_t(y,w) \dd y \dd w \\
& = &  \lambda \int_{\R^2}  \partial_x^2 K(x-y) \na_w  \ln \po  \frac{f_t (y,w)}{\hat f_t(y,w)} \pf  f_t(y,w) \dd y \dd w \,.
\end{eqnarray*}
As a consequence, for all $(x,v)\in\R^2$,
\[|\partial_t A \na \ln \hat f_t(x,v)|^2 \leqslant (\lambda')^2    \int_{\R^2}  \left|  \na_w  \ln  \frac{f_t}{\hat f_t} \right|^2   f_t \,,\]
and thus 
\begin{eqnarray*}
(\star\star) & \leqslant &  2 \lambda' \co \int_{\R^2}  \left|A  \na  \ln  \frac{f_t}{\hat f_t} \right|^2   f_t   \int_{\R^2}  \left|  \na_w  \ln  \frac{f_t}{\hat f_t} \right|^2   f_t \cf^{1/2} \\
& \leqslant & \lambda '   \int_{\R^{2}}  \na \ln \frac{f_t}{\hat f_t} \cdot \co  \theta A^T A  + \theta^{-1} \begin{pmatrix}
0 & 0 \\ 0 & 1
\end{pmatrix} \cf    \na  \ln \frac{f_t}{\hat f_t}   f_t
\end{eqnarray*}
for any $\theta>0$. We end up with
\[\partial_t \int_{\R^{2}} \left|A \na \ln \frac{f_t}{\hat f_t} \right|^2 f_t \leqslant  2 \int_{\R^{2}}  \na \ln \frac{f_t}{\hat f_t} \cdot \co  A^T A \po J_0 +   \lambda '   \theta I\pf  +   \lambda '  \theta^{-1} \begin{pmatrix}
0 & 0 \\ 0 & 1
\end{pmatrix} \cf    \na  \ln \frac{f_t}{\hat f_t}   f_t \,. \]

As discussed in \cite[Section 2.1]{Contraction} the matrix $A^T A$ that we should take here to get a contraction is related to the modified Euclidean distance used along the coupling in the  proof of Proposition~\ref{prop:cv}. As a consequence, considering $a=\min(\gamma,\gamma^{-1})/2$ and $b=1+a^2 - a\gamma$ as in the latter, we take $A$ to be the symmetric square root of $A A^T = M^{-1}$ where
\[M = \begin{pmatrix}
1 &- a \\ -a & b + a^2 
\end{pmatrix} \,.\]
Notice that the norm used in the proof of Proposition~\ref{prop:cv} is the square-root of $(x,v) \mapsto (x,-v)^T M (x,-v)$ (The velocity flip is due to the fact we work here with $L_{f_t}^*$ instead of $L_{f_t}$, i.e. we work with the relative density $f_t/\hat f_t$ instead of the Markov semi-group associated to $L_{f_t}$). Indeed, revisiting the computations of the proof of Proposition~\ref{prop:cv}, we see that that, for any   $u\in\R^{2}$, writing $v=M^{-1} u$,
\begin{eqnarray*}
u^T A A^T J_0 u  & = & v^T J_0 M v \\
& = & v^T \begin{pmatrix}
-a  &  b+ a^2 \\ -1+\gamma a & a - \gamma(b+a^2)
\end{pmatrix} v\\
& = & v^T \co -a \begin{pmatrix}
1 & -a \\ -a & a^2 
\end{pmatrix} + b(a-\gamma) \begin{pmatrix}
0 & 0 \\ 0 & 1 
\end{pmatrix}  \cf v\\
& = & -a |v_1 - a v_2|^2 + b(a-\gamma)|v_2|^2\,,
\end{eqnarray*}
to be compared to \eqref{eq:mult}. Following the computations of the proof of Proposition~\ref{prop:cv}, under \eqref{eq:lambda_condition}, we get that, for a suitable choice of $\theta$,
\[u^T  \co  A^T A \po J_0 + \lambda '   \theta I\pf  +   \lambda '  \theta^{-1} \begin{pmatrix}
0 & 0 \\ 0 & 1
\end{pmatrix} \cf u \leqslant  - \frac{a}{8} v^T M v = - \frac{a}{8}|A u|^2\,.   \]

In other words, with this choice for $A$,
\[\partial_t \int_{\R^{2}} \left|A \na \ln \frac{f_t}{\hat f_t} \right|^2 f_t \leqslant - \frac{a}{4} \int_{\R^{2}} \left|A \na \ln \frac{f_t}{\hat f_t} \right|^2 f_t \,,\]
which concludes. 
\end{proof}

\subsection*{Acknowledgments}

This work has been supported by the projects CONVIVIALITY (ANR-23-CE40-0003) and SWIDIMS (ANR-20-CE40-0022) of the French National Research Agency.


\end{document}